\newtheorem{thm}{Theorem}[section]
\newtheorem{lem}[thm]{Lemma}
\newtheorem{pro}[thm]{Proposition}
\theoremstyle{remark}
\newtheorem*{rem}{Remark}
\newcommand{\al}{\alpha}
\newcommand{\be}{\beta}
\newcommand{\Ga}{\Gamma}
\newcommand{\om}{\omega}
\newcommand{\Ta}{\Theta}
\newcommand{\cD}{\mathfrak{D}}
\newcommand{\cL}{\mathcal{L}}
\newcommand{\cE}{\mathfrak{E}}
\newcommand{\cf}{\mathfrak{f}}
\newcommand{\fT}{\mathfrak{T}}
\newcommand{\fR}{\mathfrak{R}}
\newcommand{\dmrjdel}[1]{}
\newcommand{\hook}{\mathfrak{h}}
\newcommand{\m}{\mathbf{m}}
\newcommand{\M}{\mathbf{M}}
\title{Hook Weighted Increasing Trees, Cayley Trees and Abel-Hurwitz Identities}
\date{\today}
\author{S.R. Carrell}
\address{Department of Combinatorics \& Optimization. University of Waterloo, Canada}
\email{srcarrel@uwaterloo.ca}
\begin{document}

\begin{abstract}

Recently F\'eray, Goulden and Lascoux gave a proof of a new hook summation formula for unordered increasing trees by means of a generalization of the
Pr\"ufer code for labelled trees and posed the problem of finding a bijection between weighted increasing trees and Cayley trees.
We give such a bijection, providing an answer to the problem posed by F\'eray, Goulden and Lascoux as well as showing a combinatorial connection to
the theory of tree volumes defined by Kelmans. In addition we give two simple proofs of the hook summation formula. As an application we describe how the hook
summation formula gives a combinatorial proof of a generalization of Abel and Hurwitz' theorem, originally proven by Strehl.

\end{abstract}

\maketitle

\section{Introduction}

We begin by fixing some terminology.
A tree $T$ is an acyclic connected graph and we denote by $V(T)$ the set of vertices of $T$ and $E(T)$ the set of edges. A tree is said to
be rooted if one of its vertices is distinguished. This distinguished vertex is called the root. We only consider unordered trees, that is,
trees in which the children of any vertex are unordered. Given a finite set $A$, we let $\m(A) = \min(A)$ and $\M(A) = \max(A)$.

Let $T$ be a labelled tree with vertex labels given by the finite set $A$ and rooted at the vertex labelled with $\m(A)$. We direct the edges of $T$ away from the root so that if
$(i,j)$ is an edge in $T$ then $i$ is on the unique path from the root of $T$ to $j$. In this case we call $i$ the father of $j$ in $T$ and denote this by $\cf_T(j)$. A vertex
is said to be increasing if $\cf_T(i) < i$ and is decreasing otherwise. Note that $\cf_T(\m(A))$ is not defined and so the root of $T$ is neither increasing nor decreasing. We
say that a tree $T$ is increasing if every non root vertex in $T$ is an increasing vertex.

Given a tree $T$ and a vertex $i \in V(T)$ we define the hook generated by $i$ in $T$, written $\hook_T(i)$, to be the set of vertices $j$ such that $i$
is in the unique path from the root to $j$ in $T$. In other words, $\hook_T(i)$ is the set of vertices in the subtree of $T$ rooted at $i$. Note
that $i \in \hook_T(i)$.

Consider the family $\fT_A$ of increasing unordered labelled trees with vertex labels given by $A$. F\'eray, Goulden and Lascoux\cite{FGL2} studied a combinatorial sum
involving a hook weight summed over increasing trees with a fixed number of vertices. Using a generalization of the Pr\"ufer code, it was shown that these
sums have an appealing multiplicative closed form. In particular, the following theorem is proven.

\begin{thm}[Theorem 1.1 in \cite{FGL2}]\label{Thm.2}
	For a tree $T \in \fT_A$ define a weight on $T$ as
		\[ \widetilde{w}(T) = \prod_{i \in A \backslash \m(A)} x_{\cf_T(i)} \left( \sum_{ j \in \hook_T(i) } y_{i,j} \right). \]
	Then the generating series is given by
		\begin{align*}
			\Ta_A	&= \sum_{T \in \fT_A} \widetilde{w}(T) \\
				&= x_{\m(A)} y_{\M(A),\M(A)} \prod_{i \in A \backslash \{\M(A),\m(A)\}} \left( y_{i,i}\sum_{\substack{j \in A \\ j \leq i}} x_j + x_i \sum_{\substack{j \in A \\ j > i}} y_{i,j} \right).
		\end{align*}
\end{thm}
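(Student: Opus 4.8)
The plan is to prove the identity by induction on $|A|$, after converting the sum over increasing trees into a recursion obtained by splitting $T$ into the subtrees hanging below the root. Throughout write $\Ta_C=\sum_{T\in\fT_C}\widetilde{w}(T)$ for an arbitrary finite label set $C$, with the convention $\Ta_{\{c\}}:=1$ (the one-vertex tree has empty weight), and set $\m=\m(A)$, $\M=\M(A)$, $\m_2=\min(A\setminus\{\m\})$.

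First I would record two structural facts. (i) If $T\in\fT_A$ and $c'$ is a child of the root $\m$, then along any path from $c'$ to a descendant $v$ the labels strictly increase (increasingness), so $c'$ is the smallest label of $\hook_T(c')$; hence the subtree of $T$ rooted at $c'$ is itself an increasing tree on the label set $\hook_T(c')$ rooted at its minimum. Conversely, an increasing tree on $A$ is precisely the data of a set partition of $A\setminus\{\m\}$ together with an increasing tree on each block (each block becoming the vertex set of a subtree hung below $\m$). Since for a non-root vertex $i$ that is not a block minimum both $\cf_T(i)$ and $\hook_T(i)$ are computed inside the block of $i$, while for the block minimum $c'$ one has $\cf_T(c')=\m$ and $\hook_T(c')=$ the whole block, the weight $\widetilde{w}$ factorises over blocks. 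Isolating the block $B$ that contains $\m_2$ — its contribution is $x_{\m}\big(\sum_{j\in B}y_{\m_2,j}\big)\,\Ta_B$, while the remaining blocks reassemble into $\Ta_{A\setminus B}$ because $\min(A\setminus B)=\m$ — this yields the recursion
\[ \Ta_A \;=\; \sum_{\substack{B\subseteq A\setminus\{\m\}\\ \m_2\in B}} x_{\m}\Big(\sum_{j\in B}y_{\m_2,j}\Big)\,\Ta_B\,\Ta_{A\setminus B}, \]
in which every set appearing on the right is strictly smaller than $A$.

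Then I would induct on $n=|A|$. The base case $n=1$ is the convention $\Ta_{\{c\}}=1$, and for $n=2$, $A=\{a<b\}$, the recursion gives $\Ta_A=x_ay_{b,b}$, matching the empty product on the right-hand side of the theorem. For $n\ge 3$ one substitutes the claimed product formula for $\Ta_B$ and $\Ta_{A\setminus B}$ (using $\Ta_{\{c\}}=1$ for the degenerate choices $B=\{\m_2\}$ and $B=A\setminus\{\m\}$) into the recursion and checks that the resulting sum over subsets $B\ni\m_2$ collapses to $x_{\m}\,y_{\M,\M}\prod_{i\in A\setminus\{\m,\M\}}\big(y_{i,i}\sum_{j\le i}x_j+x_i\sum_{j>i}y_{i,j}\big)$; it is convenient to relabel $A$ order-isomorphically to $\{1,\dots,n\}$ and to split according to whether $\M\in B$ (so the front factor $y_{\M,\M}$ is supplied by $\Ta_B$) or $\M\in A\setminus B$ (so it comes from $\Ta_{A\setminus B}$). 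The \emph{main obstacle} is exactly this last verification, namely that the product formula satisfies the subtree recursion. I expect to handle it by a secondary induction — peeling the largest element out of $A$ — or by expanding both products over the ``$y_{i,i}$ versus $y_{i,j}$'' choices and matching monomials, the point being that after summing over $B$ the stray factor $\sum_{j\in B}y_{\m_2,j}$ recombines with the $i=\m_2$ factor $y_{\m_2,\m_2}\sum_{j\le\m_2}x_j+x_{\m_2}\sum_{j>\m_2}y_{\m_2,j}$.

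As an alternative second proof I would instead expand the hook weight by choosing, for each non-root $i$, an element $\phi(i)\in\hook_T(i)$, construct a weight-preserving bijection from these decorated increasing trees onto labelled (Cayley) trees or forests carrying a manifestly multiplicative weight, and then evaluate the Cayley-tree sum by the weighted Matrix--Tree theorem (equivalently, Kelmans' tree-volume formula). In that route the obstacle shifts to writing down and inverting the bijection so that the two weight assignments agree.
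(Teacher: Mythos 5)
Your main route reproduces, in outline, the paper's indirect combinatorial proof: the decomposition of an increasing tree into the blocks hanging below the root is sound, and the recursion you derive,
\[ \Ta_A \;=\; \sum_{\substack{B\subseteq A\setminus\{\m(A)\}\\ \m_2\in B}} x_{\m(A)}\Big(\sum_{j\in B}y_{\m_2,j}\Big)\,\Ta_B\,\Ta_{A\setminus B}, \qquad \m_2=\min\bigl(A\setminus\{\m(A)\}\bigr), \]
is exactly Proposition~\ref{BigRecursion} with $a=\m_2$. The gap is at the step you yourself flag as the main obstacle: showing that the closed-form product $t_A$ satisfies this same recursion. You defer this to an unspecified ``secondary induction'' or to ``expanding both products and matching monomials,'' but that verification is not a routine computation --- it is itself an Abel--Hurwitz-type identity of essentially the same depth as the theorem being proved (compare Theorem~\ref{Thm.S}), and nothing in your sketch indicates how the sum over subsets $B$ actually collapses. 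As written, the induction does not close.

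The paper sidesteps this algebra entirely: by the matrix-tree theorem the product $t_A$ is the generating series for weighted Cayley trees (Proposition~\ref{MatrixTreeProp}), and applying the very same ``subtree containing $a$'' decomposition to Cayley trees (Lemma~\ref{singleEdge}, Proposition~\ref{LittleRecursion}) yields the recursion for $t_A$ with essentially no computation. That is the missing lemma you would need to supply to keep your structure. Your alternative second route --- decorating each non-root vertex with a choice $\phi(i)\in\hook_T(i)$, bijecting onto Cayley trees, and invoking the weighted matrix-tree theorem --- is precisely the paper's bijective proof in Section~\ref{Section.Direct}; but there, too, the entire content lies in the bijection you have not written down (the paper's ``unsorting'' step detaches the largest vertex $v$ with $\phi(v)\neq v$ from its father and from the child leading to $\phi(v)$, splices that child to the father, and reattaches $v$ as a child of $\phi(v)$, iterating until no such $v$ remains). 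In both routes the plan is right and the decisive step is missing.
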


If one makes the specialization $x_i \to 1$ and $y_{i,j} \to 1$ for all $i$ and $j$ in Theorem~\ref{Thm.2} then the right hand side of the identity 
becomes $|A|^{|A|-2}$, which is the number of Cayley trees with vertices labelled by the set $A$ as shown by Cayley\cite{Cayley}. In other words, $|A|^{|A|-2}$ is the number of trees
with labels given by $A$ and which are not rooted and not necessarily increasing (although for convenience we may assume that a Cayley tree is rooted at the vertex labelled by $\m(A)$).
This observation prompted F\'eray, Goulden and Lascoux to ask for a combinatorial bijection between increasing trees and Cayley trees which could be used to prove
Theorem~\ref{Thm.2}. Further evidence for the existence of such a bijection was provided by some results in F\'eray and Goulden's earlier paper\cite{FG1} in which the authors
study a specialization of Theorem~\ref{Thm.2} and are able to give a combinatorial bijection for the top degree of the polynomial identity (Section 2.2 in \cite{FG1}) which
involves Cayley trees. In Section~\ref{Section.Direct} we give a bijective proof of Theorem~\ref{Thm.2} which involves Cayley trees, solving the problem posed by
F\'eray, Goulden and Lascoux.

In addition to answering the question posed by F\'eray, Goulden and Lascoux, the contents of Section~\ref{Section.Direct} also indicates a connection between the hook sum
formula in Theorem~\ref{Thm.2} and the theory of tree volume formulas defined by
Kelmans\cite{K92} and further studied by Kelmans, Postnikov and Pitman\cite{P01,P02,KP08}. This connection comes from Theorem~\ref{Bijection} below which implies that the
generating polynomial $\Ta_A$ in Theorem~\ref{Thm.2} is in fact a tree volume polynomial corresponding to the complete graph. More generally, this gives a connection between
the hook sum formula in Theorem~\ref{Thm.2} and various generalizations of the binomial theorem, such as Abel and Hurwitz' identities.

As an application of Theorem~\ref{Thm.2} we will take a moment to discuss more directly the connection to a multivariate generalization of the binomial theorem.
In \cite{Strehl}, Strehl proves the following multivariate generalization of the binomial theorem.
\begin{thm}[Theorem 1(7) in \cite{Strehl}]\label{Thm.S}
	Suppose $A$ is a finite set of positive integers and let
		\[ w_A(z) = z \prod_{i \in A \backslash \{\M(A)\}} \left( z + \sum_{\substack{j \in A \\ j \leq i}} x_j + \sum_{\substack{j \in A \\ j > i}} y_{i,j} \right). \]
	Then
		\[ w_A(u+v) = \sum_{B \sqcup C = A} w_B(u)w_C(v), \]
	where $B \sqcup C = A$ means that $B \cup C = A$ and $B \cap C = \emptyset$.
\end{thm}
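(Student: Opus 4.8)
The plan is to deduce Theorem~\ref{Thm.S} from the hook summation formula of Theorem~\ref{Thm.2} by decomposing increasing trees at an auxiliary root. The first step is to recognize $w_A(z)$ itself as a hook sum. Adjoin the label $0$ to $A$, set $A^+=\{0\}\cup A$ (so $\m(A^+)=0$ and $\M(A^+)=\M(A)$), and specialize the variables of $\Ta_{A^+}$ by $y_{i,i}=1$ for all $i$ and $y_{i,j}\mapsto y_{i,j}/x_i$ whenever $i<j$, leaving $x_0$ free. A direct comparison of closed forms shows that under this (invertible) change of variables
\[ \Ta_{A^+}\;=\;x_0\prod_{i\in A\setminus\{\M(A)\}}\Big(x_0+\sum_{\substack{j\in A\\ j\le i}}x_j+\sum_{\substack{j\in A\\ j>i}}y_{i,j}\Big), \]
so that $w_A(z)=\Ta_{A^+}\big|_{x_0=z}$, with the convention $w_{\emptyset}=1$ (the one‑vertex tree on $\{0\}$). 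By Theorem~\ref{Thm.2}, $\Ta_{A^+}=\sum_T\widetilde w(T)$, the sum running over increasing unordered trees $T$ on the label set $A^+$, rooted at $0$, where $\widetilde w$ is taken under the $y$‑specialization above.

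The structural fact that makes the convolution transparent is that such a tree is exactly the data of an \emph{arbitrary} set partition $A=A_1\sqcup\cdots\sqcup A_k$ together with a choice of increasing tree on each block: since labels strictly increase along every downward path in an increasing tree, deleting $0$ leaves increasing trees on the blocks, each rooted at its minimum, and these minima are precisely the children of $0$ in $T$; conversely, reattaching any such forest to $0$ gives an increasing tree. Moreover $\hook_T(i)$ is the subtree of $T$ rooted at $i$, hence lies inside the block of $i$, so in $\widetilde w(T)=\prod_{i\in A}x_{\cf_T(i)}\big(\sum_{j\in\hook_T(i)}y_{i,j}\big)$ the factor indexed by $i\in A_\ell$ depends only on the increasing tree on $A_\ell$, except that each block contributes exactly one factor $x_0$, from the edge joining $0$ to that block's minimum. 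Hence $\widetilde w(T)=x_0^{\,k}\prod_{\ell=1}^{k}W_\ell$ where $W_\ell$ is built from the increasing tree on $A_\ell$ alone.

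Now put $x_0=u+v$ and expand $x_0^{\,k}=(u+v)^k=\sum_{S\subseteq[k]}u^{|S|}v^{k-|S|}$; choosing $S$ amounts to two‑colouring the blocks, ``$u$'' for $\ell\in S$ and ``$v$'' otherwise. Let $B=\bigcup_{\ell\in S}A_\ell$ and $C=A\setminus B$. The data (tree $T$ on $A^+$, subset $S$ of its blocks) then matches the data (increasing tree $T_B$ on $\{0\}\cup B$, increasing tree $T_C$ on $\{0\}\cup C$) obtained by splitting $T$ at $0$ and keeping the $u$‑blocks attached to one copy of $0$, the $v$‑blocks to the other; and by the factorization above the contribution of a fixed $(T,S)$ to the expansion of $\widetilde w(T)\big|_{x_0=u+v}$ equals $\widetilde w(T_B)\big|_{x_0=u}\cdot\widetilde w(T_C)\big|_{x_0=v}$. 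Summing over all $(T,S)$, equivalently over all triples $(B\sqcup C=A,\,T_B,\,T_C)$, and applying Theorem~\ref{Thm.2} in reverse on $\{0\}\cup B$ and on $\{0\}\cup C$, gives
\[ w_A(u+v)=\sum_{B\sqcup C=A}\Big(\sum_{T_B}\widetilde w(T_B)\big|_{x_0=u}\Big)\Big(\sum_{T_C}\widetilde w(T_C)\big|_{x_0=v}\Big)=\sum_{B\sqcup C=A}w_B(u)\,w_C(v), \]
which is the asserted identity.

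The only real work should be the bookkeeping in the first and last steps: checking that the specialization $y_{i,i}=1$, $y_{i,j}\mapsto y_{i,j}/x_i$ carries $\Ta_{A^+}$ exactly onto the product displayed above, and verifying the multiplicativity $\widetilde w(T)=x_0^{\,k}\prod_\ell W_\ell$ — in particular that after the substitution no cross‑block $y$‑variable and no stray $x$‑factor survives, so the reassembled products really are $\widetilde w(T_B)$ and $\widetilde w(T_C)$. Once the weight factors over the blocks, deleting the auxiliary root and expanding $(u+v)^k$ make the convolution automatic; this is also the shape in which Strehl's identity becomes the Abel--Hurwitz‑type convolution law for the tree volume polynomial of a complete graph referred to earlier.
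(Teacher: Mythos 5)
Your proposal is correct and is essentially the paper's own argument: the paper proves Theorem~\ref{Thm.Binom} from Theorem~\ref{Thm.2} by colouring the edges at the auxiliary root red or blue and marking them with $u$ or $v$ (your expansion of $x_0^{k}=(u+v)^{k}$ over the subtrees hanging off $0$), and then recovers Theorem~\ref{Thm.S} via exactly the specialization $y_{i,i}\to 1$, $y_{i,j}\to y_{i,j}/x_i$ that you use. You have simply carried out the same two steps in the opposite order and with more detail than the paper's one-line proof.
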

By specializing variables, Theorem~\ref{Thm.S} can be seen to be a generalization of the binomial theorem. In particular, if we let $y_{i,j} \to 0$ and $x_i \to 0$ for all $i$ and $j$
then it is easily seen that the identity in Theorem~\ref{Thm.S} is the binomial identity. If we let $y_{i,j} \to 1$ and $x_i \to 1$ for all $i$ and $j$ then Theorem~\ref{Thm.S} gives
Abel's generalization\cite{Abel,Riordan} of the binomial theorem,
	\[ (u+v)(u+v+n)^{n-1} = \sum_{k = 0}^n \binom{n}{k} u (u+k)^{k-1}v(v+(n-k))^{n-k-1}, \]
where $n = |A|$. Lastly, if we let $y_{i,j} \to x_j$ for $i < j$ then Theorem~\ref{Thm.S} becomes Hurwitz' generalization\cite{Hurwitz} of Abel's identity,
	\[ (u+v)\left(u+v+ \sum_{i \in A} x_i\right)^{|A|-1} = \sum_{B \sqcup C = A} u\left(u+\sum_{i \in B} x_i\right)^{|B|-1} v \left(v+\sum_{i \in C}x_i\right)^{|C|-1}. \]
We refer the reader to Strehl's paper \cite{Strehl} for additional specializations of interest as well as a number of applications.

Theorem~\ref{Thm.2} can be used to give a new proof of Theorem~\ref{Thm.S}.
\begin{thm}\label{Thm.Binom}
	Let $x_i, i \geq 0$ and $y_{i,j}, 0 \leq i < j$ be indeterminates and for any finite set $A$ of integers let
 		\[ \Ta_A = x_{\m(A)} y_{\M(A),\M(A)} \prod_{i \in A \backslash \{\M(A),\m(A)\}} \left( y_{i,i}\sum_{\substack{j \in A \\ j \leq i}} x_j + x_i \sum_{\substack{j \in A \\ j > i}} y_{i,j} \right). \]
 	Then for any finite set $A$ of positive integers,
 		\[ \left. \Ta_{A \cup \{0\}} \right|_{x_0 = u + v} = \sum_{B \sqcup C = A} \left. \Ta_{B \cup \{0\}} \right|_{x_0 = u} \left. \Ta_{C \cup \{0\}} \right|_{x_0 = v}. \]
\end{thm}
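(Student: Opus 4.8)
The plan is to recognize the polynomial $\Ta_D$ as the generating function of Theorem~\ref{Thm.2}: by that theorem $\Ta_D=\sum_{T\in\fT_D}\widetilde{w}(T)$ whenever $|D|\ge 2$, and for $|D|=1$ the sum is the empty product $1$, which is the value one must take for $\Ta_{\{0\}}$. Thus $\left.\Ta_{A\cup\{0\}}\right|_{x_0=u+v}$ counts increasing trees on $A\cup\{0\}$, rooted at $0=\m(A\cup\{0\})$, under the weight $\widetilde{w}$ with $x_0$ replaced by $u+v$. The key preliminary observation is that in $\widetilde{w}(T)=\prod_{i\in A}x_{\cf_T(i)}\big(\sum_{j\in\hook_T(i)}y_{i,j}\big)$ the variable $x_0$ occurs exactly once for each child of the root and nowhere else, since the hook sums involve no $x$'s. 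Writing $c(T)$ for the set of children of $0$ in $T$ and $\widehat{w}(T)=\widetilde{w}(T)/x_0^{|c(T)|}$ for the $x_0$-free part of the weight, we then have $\left.\Ta_{A\cup\{0\}}\right|_{x_0=u+v}=\sum_{T\in\fT_{A\cup\{0\}}}(u+v)^{|c(T)|}\widehat{w}(T)=\sum_{T}\sum_{S\subseteq c(T)}u^{|S|}v^{|c(T)\setminus S|}\,\widehat{w}(T)$, a sum over pairs $(T,S)$ with $S$ a chosen set of children of the root.

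Next I would set up a bijection between such pairs $(T,S)$ and triples consisting of a splitting $B\sqcup C=A$ together with trees $T_B\in\fT_{B\cup\{0\}}$ and $T_C\in\fT_{C\cup\{0\}}$. The subtrees of $T$ rooted at the children of $0$ partition $A$, so put $B=\bigcup_{c\in S}\hook_T(c)$ and $C=\bigcup_{c\in c(T)\setminus S}\hook_T(c)$, and let $T_B$ and $T_C$ be the restrictions of $T$ to $B\cup\{0\}$ and $C\cup\{0\}$; the inverse map glues $T_B$ and $T_C$ along their common root $0$ and records $S$ as the set of children of $0$ coming from $T_B$. I would then check that $T_B$ and $T_C$ lie in the asserted families: positivity of the elements of $A$ gives $0=\m(B\cup\{0\})=\m(C\cup\{0\})$, and every non-root vertex of $T_B$ retains the strictly smaller father it had in $T$, so $T_B$ is increasing, and symmetrically for $T_C$.

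Finally I would verify that the correspondence is weight-preserving term by term. Because $\cf_T$ restricts to $\cf_{T_B}$ on $B$ and to $\cf_{T_C}$ on $C$, and because for $i\in B$ the whole subtree of $T$ below $i$ stays inside $B$, so that $\hook_T(i)=\hook_{T_B}(i)$ (and symmetrically for $C$), the product defining $\widehat{w}(T)$ factors as $\widehat{w}(T_B)\widehat{w}(T_C)$; combined with $|S|=|c(T_B)|$ and $|c(T)\setminus S|=|c(T_C)|$ this shows that the $(T,S)$-term equals $\left.\widetilde{w}(T_B)\right|_{x_0=u}\left.\widetilde{w}(T_C)\right|_{x_0=v}$. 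Summing over all triples then yields $\left.\Ta_{A\cup\{0\}}\right|_{x_0=u+v}=\sum_{B\sqcup C=A}\left.\Ta_{B\cup\{0\}}\right|_{x_0=u}\left.\Ta_{C\cup\{0\}}\right|_{x_0=v}$, as claimed. The main obstacle is precisely this last ``locality'' bookkeeping — confirming that restricting an increasing tree to a union of root-subtrees keeps it in $\fT$ and leaves every $\cf$- and $\hook$-statistic unchanged — together with pinning down the degenerate case $B=\emptyset$ or $C=\emptyset$, which corresponds to the one-vertex tree on $\{0\}$ and forces $\Ta_{\{0\}}=1$. A purely algebraic alternative would be induction on $|A|$, peeling the root's children out of the product formula, but the bijective route is cleaner.
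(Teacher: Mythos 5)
Your proof is correct and is essentially the paper's own argument: the paper's one-line proof says both sides count increasing trees on $A\cup\{0\}$ with each root edge coloured blue (weight $u$) or red (weight $v$), and your subset $S\subseteq c(T)$ together with the splitting $B=\bigcup_{c\in S}\hook_T(c)$, $C=A\setminus B$ is exactly that colouring made explicit. You simply spell out the bookkeeping (locality of $\cf$ and $\hook$ under restriction, and the convention $\Ta_{\{0\}}=1$ for the empty part) that the paper leaves implicit.
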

\begin{proof}
	Our Theorem~\ref{Thm.Binom} above follows directly from Theorem~\ref{Thm.2} since both sides
	of the equality in Theorem~\ref{Thm.Binom} count trees in which each root edge is coloured either red or blue and then each blue edge is marked with a $u$ and each
	red edge is marked with a $v$.
\end{proof}
Note that if $A$ is a finite set of positive integers and we let $y_{i,i} \to 1$ for all $i$ and $y_{i,j} \to \frac{y_{i,j}}{x_i}$ for all $i < j$ then we recover Theorem~\ref{Thm.S}
from Theorem~\ref{Thm.Binom} where $w_A(z) = \left. \Ta_{A \cup \{0\}} \right|_{x_0 = z}$. Similarly, if we let $z \to x_0 y_{0,0}$, $x_i \to x_i y_{i,i}$ for $i \in A$ and
$y_{i,j} \to x_i y_{i,j}$ for $i < j \in A$ then we recover Theorem~\ref{Thm.Binom} from Theorem~\ref{Thm.S} where $\Ta_{A \cup \{0\}} = w_A$.

It should be noted that the method of proof for Theorem~\ref{Thm.S} and Theorem~\ref{Thm.Binom} is very similar, the main difference being the combinatorial description of
the generating series involved. Strehl uses the description of the generating series given in Proposition~\ref{MatrixTreeProp} below as sums over Cayley trees. Instead, we use
the description of the generating series as hook weighted sums over increasing trees as given in the statement of Theorem~\ref{Thm.2}.

The remainder of this paper is organized as follows. In Section~\ref{Section.Direct} we give a combinatorial proof of Theorem~\ref{Thm.2} by describing an `unsorting' operation
which can be applied to increasing trees and, after repeated application, results in a Cayley tree. Following the combinatorial proof we also describe two simple
proofs of Theorem~\ref{Thm.2}. In Section~\ref{Section.Indirect} we give an indirect combinatorial proof by showing that both expressions for the
polynomials $\Ta_A$ given in Theorem~\ref{Thm.2} satisfy the same recursion and initial conditions and in Section~\ref{Section.Algebraic} we give a direct algebraic proof
which uses the fact that increasing trees can be constructed inductively by adding leaves.

\section{A Bijective Proof}\label{Section.Direct}

Let $\cL_{i,j}(A)$ be the set of pairs $(T, \phi)$ where $T$ is a labelled tree with vertex labels given by $A$, $\phi$ is a function from the set of increasing vertices in $T$ to $A$
and the pair $(T,\phi)$ satisfies the following conditions.
\begin{enumerate}
  \item [(1)]	For any increasing vertex $v$ in $T$, $\phi(v) \in \hook_T(v)$ and $\phi(v) \geq v$.
  \item [(2)]	If $v$ is an increasing vertex in $T$ and $\phi(v) \not = v$ then every vertex on the unique path from $v$ to the root (not including the root) is increasing.
  \item [(3)]	If $v$ is a decreasing vertex in $T$ and $u$ is an increasing vertex with $\phi(u) \not = u$ then $u < v$.
  \item [(4)]	$T$ has $i$ decreasing vertices.
  \item [(5)]	$T$ has $j$ increasing vertices $v$ with $\phi(v) \not = v$. 
\end{enumerate}
Define a weight function on $\cL_{i,j}(n)$ by
	\[ \om(T, \phi) = \prod_{\substack{increasing \\ w \in V(T)}} x_{\cf_T(w)} y_{w,\phi(w)} \prod_{\substack{decreasing \\ w \in V(T)}} x_w y_{w,\cf_T(w)}. \]

The goal of the following theorem is to describe a method by which we can transform trees contained in the sets $\cL_{0,j}(n)$ (increasing trees) into trees counted
by the sets $\cL_{i,0}(n)$. The reason for this is that the increasing trees contained in the $\cL_{0,j}(n)$ sets are the objects of interest for the purposes of Theorem~\ref{Thm.2},
however, the weight function depends on non-local information. In particular, for some vertex $v$ it may be the case that $\phi(v)$ is not adjacent to $v$ since the only condition
is that $\phi(v) \in \hook_T(v)$. Fortunately, the following theorem says that we can repeatedly `unsort' the increasing trees so that they become Cayley trees in which the weight
function is entirely local. That is, in $\cL_{i,0}(n)$ the weight of each tree depends only on vertices and their neighbors and so the generating series can be computed in
a straightforward way.

\begin{thm}\label{Bijection}
	There exists a weight preserving bijection between $\cL_{i,j}(A)$ and $\cL_{i+1,j-1}(A).$
\end{thm}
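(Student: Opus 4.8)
The plan is to realise the bijection by a single ``path rotation'' that converts one moved increasing vertex into a decreasing vertex, made canonical by always acting on the \emph{largest} such vertex; iterating it then carries $\bigsqcup_j\cL_{0,j}(A)$ (increasing trees with an arbitrary $\phi$) onto $\bigsqcup_i\cL_{i,0}(A)$ (Cayley trees with $\phi=\mathrm{id}$). Fix $j\ge 1$. Given $(T,\phi)\in\cL_{i,j}(A)$, let $v$ be the moved increasing vertex of largest label (so $\phi(v)\ne v$ and, by condition~(3), every decreasing vertex of $T$ exceeds $v$). Put $p=\cf_T(v)$ and let $v=u_0,u_1,\dots,u_k=\phi(v)$ be the downward path in $T$ from $v$ to $\phi(v)$, with $k\ge 1$. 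Define $T'$ by re-routing exactly two edges, $\cf_{T'}(u_1)=p$ and $\cf_{T'}(v)=u_k$, all other fathers unchanged; thus the path $p\to v\to u_1\to\cdots\to u_k$ of $T$ is replaced by $p\to u_1\to\cdots\to u_k\to v$ in $T'$, every hanging subtree riding along, and $\phi'$ is the restriction of $\phi$ to the increasing vertices of $T'$.

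First I would record three structural facts about $T$. (a) Every $u_m$ with $1\le m\le k$ has label $>v$: the child $u_1$ cannot be decreasing by condition~(3), so $u_1>v$, and then one induces along the path, each $u_m$ being either increasing (hence $>u_{m-1}$) or decreasing (hence $>v$ by condition~(3)). (b) No proper descendant of $v$ in $T$ is moved increasing: if $w$ were one, the child $c$ of $v$ on the way to $w$ is increasing by condition~(3), so $c>v$, while condition~(2) applied to $w$ forces the labels to strictly decrease from $w$ up to $c$, giving $w\ge c>v$ and contradicting the maximality of $v$. (c) The ancestors of $v$ strictly above $v$ lie, by condition~(2), on an increasing path to the root, so they have strictly decreasing labels; in particular $p$ and all higher ancestors of $v$ have label $<v$.

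Next I would check that $(T',\phi')\in\cL_{i+1,j-1}(A)$ and that $\om$ is preserved. By (a) and (c), in $T'$ the vertex $v$ becomes decreasing (father $u_k>v$) and $u_1$ stays increasing (father $p<u_1$), while no other vertex changes father or increasing/decreasing status; so $T'$ has $i+1$ decreasing vertices, and by (b) its set of moved increasing vertices is exactly that of $T$ with $v$ removed, which gives condition~(5) and, combined with the maximality of $v$ and condition~(3) for $T$, condition~(3) for $T'$. Condition~(2) for $T'$ follows from (b): a moved increasing vertex of $T'$ is not a descendant of $v$ in $T$, so the rotation leaves its whole path to the root untouched and hence increasing; condition~(1) for $T'$ holds because the only vertex whose hook shrinks is $v$ itself, now decreasing. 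For the weight, $v$ trades its factor $x_p$ for $x_v$ while $u_1$ trades $x_v$ for $x_p$, and $\cf_{T'}(v)=u_k=\phi(v)$ keeps every $y$-factor unchanged, so $\om(T',\phi')=\om(T,\phi)$.

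Finally I would define the inverse $\Psi$ on $(T',\phi')\in\cL_{i+1,j-1}(A)$: take $v$ to be the \emph{smallest} decreasing vertex of $T'$, walk up from $v$ until the first ancestor $p$ with label $<v$, name the ancestors strictly between them $u_k,u_{k-1},\dots,u_1$ (with $u_k=\cf_{T'}(v)$), reverse the rotation, and extend $\phi'$ by $\phi(v):=u_k$. By (a) and (c) the ancestors of $v$ in $T'$ are precisely $u_k,\dots,u_1$ (all $>v$) followed by $p$ and higher ancestors (all $<v$), so $\Psi$ recovers exactly the data $\Phi$ used; and condition~(3) for $T'$ forces the vertex created by $\Phi$ to be the smallest decreasing vertex of $T'$, so $\Psi$ picks the right one. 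Verifying $\Psi(T',\phi')\in\cL_{i,j}(A)$ and that $\Phi,\Psi$ are mutually inverse is then bookkeeping parallel to the previous paragraph. I expect fact~(b) to be the main obstacle — excluding moved increasing vertices below $v$ is exactly what makes the rotation compatible with condition~(2) — with the secondary subtlety being the purely label-based recognition of $v$ and of $p$ (facts (a) and (c)) needed to make $\Phi$ and $\Psi$ undo each other on the nose.
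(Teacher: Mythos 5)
Your proposal is correct and is essentially the paper's own proof: you pick the largest moved increasing vertex $v$, detach it from its father $p$ and from the child $u_1$ leading toward $\phi(v)$, reattach $u_1$ to $p$ and $v$ below $\phi(v)$, and invert by locating the smallest decreasing vertex — exactly the paper's construction with $a=p$, $b=u_1$. Your write-up actually supplies more of the verification (facts (a)–(c) and the label-based recovery of $p$ and the $u_m$) than the paper, which leaves those checks to the reader.
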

\begin{proof}
	Let $(T,\phi) \in \cL_{i,j}(A)$ and let $v$ be the increasing vertex with greatest label such that $\phi(v) \not = v$. Let $b \in \hook_v(T)$ be the vertex adjacent to $v$ with $\phi(v) \in \hook_T(b)$ and
	$a$ be the vertex adjacent to $v$ on the unique path from $v$ to the root. In other words, $a = \cf_T(v)$ and $b$ is the child of $v$ whose hook contains $\phi(v)$. Note that condition 2 on $(T,\phi)$ implies
	that every vertex in the path from $v$ to the root is increasing.
	
	Form a new tree $T'$ by removing edges $av$ and $vb$ and adding edges $ab$ and $\phi(v) v$. Since condition 3 implies that $b$ is an increasing vertex in $T$, it is still increasing in $T'$. Also, vertex
	$v$ is decreasing in $T'$ by condition 1. If we construct a function $\phi'$ from the set of increasing vertices in $T'$ to $\{1, 2, \cdots, n\}$ such that $\phi'(u) = \phi(u)$ for all increasing $u$ in $T'$
	then it is easily checked that $(T', \phi')$ satisfies the conditions for $\cL_{i+1,j-1}(A)$.
	
	To see that this map is invertible we only need for $a, b$ and $v$ to be uniquely determined in $T'$ since $\phi$ must be equal to $\phi'$ for all increasing vertices in $T'$ and $\phi(v) = \cf_{T'}(v)$.
	However, $v$ is the unique decreasing vertex in $T'$ with the smallest label (this follows from condition 4 and the choice of $v$ in $T$). Once we know this, $a$ and $b$ must be the unique pair of adjacent
	vertices on the path from $v$ to the root in $T'$ such that $a < v < b$ and every vertex on the path from the root to $a$ in $T'$ is increasing (this follows from conditions 2 and 3).
	
	It is then straightforward to check that the constructed bijection is weight preserving since the weight corresponding to vertices $v$ and $b$ in $(T,\phi)$ is equal to their weight
	in $(T', \phi')$ (although $v$ becomes decreasing in $T'$).
\end{proof}

Now we need to determine the generating series for trees in the collection of sets of the form $\cL_{i,0}(n)$. However, note that in this case the map $\phi$ is redundant since every
vertex $v$ in such a tree must have $\phi(v) = v$. In other words, this amounts to determining the generating series for the set of Cayley trees.
\begin{pro}\label{MatrixTreeProp}
	Let $\cD(A)$  be the set of Cayley trees labelled by $A$, rooted at $\m(A)$ and with edges directed toward the root. For $T \in \cD(A)$ let
		\[ w(T) = \prod_{(i,j) \in E(T)} w_{i,j}, \]
	where
		\[ w_{i,j} = 	\begin{cases}
							x_i y_{i,j}	&	\mbox{ if } i < j, \\
							x_j y_{i,i}	&	\mbox{ if } i > j.
						\end{cases} \]
	Then
		\[ \sum_{T \in \cD(A)} w(T) = x_{\m(A)} y_{\M(A),\M(A)} \prod_{i \in A \backslash \{\M(A),\m(A)\}} \left( y_{i,i} \sum_{\substack{j \in A \\ j \leq i}} x_j + x_i \sum_{\substack{j \in A \\ j > i}} y_{i,j} \right). \]
\end{pro}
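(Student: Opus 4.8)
The plan is to prove Proposition~\ref{MatrixTreeProp} by the classical Matrix-Tree approach, computing the weighted sum over rooted spanning trees of the complete graph on vertex set $A$ as a principal minor of a weighted Laplacian. Order the elements of $A$ as $a_1 < a_2 < \cdots < a_n$ where $n = |A|$, so $a_1 = \m(A)$ and $a_n = \M(A)$. Since every tree in $\cD(A)$ is rooted at $\m(A)$ with edges directed toward the root, each non-root vertex $j$ contributes exactly one outgoing edge $(j, \cf_T(j))$, so $w(T) = \prod_{j \in A \setminus \m(A)} w_{j, \cf_T(j)}$ is a product over vertices rather than an unstructured product over edges. I would set up the $n \times n$ matrix $L$ whose rows and columns are indexed by $A$, with off-diagonal entry $L_{j,k} = -w_{j,k}$ for $j \neq k$ and diagonal entry $L_{j,j} = \sum_{k \neq j} w_{j,k}$, so that each row sums to zero. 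Explicitly, for $i \in A \setminus \m(A)$ the row sum over columns other than $i$ is
\[
	\sum_{\substack{k \in A \\ k < i}} x_k y_{i,i} + \sum_{\substack{k \in A \\ k > i}} x_i y_{i,k} = y_{i,i} \sum_{\substack{k \in A \\ k \leq i}} x_k + x_i \sum_{\substack{k \in A \\ k > i}} y_{i,k} - x_i y_{i,i},
\]
which one recognizes, up to the correction term $-x_i y_{i,i}$, as the $i$-th factor on the right-hand side of the proposition.

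The key step is then the directed Matrix-Tree theorem: the sum $\sum_{T \in \cD(A)} w(T)$ equals the principal minor of $L$ obtained by deleting the row and column indexed by the root $\m(A)$. So the task reduces to evaluating $\det L^{(\m(A))}$ where $L^{(\m(A))}$ is that $(n-1) \times (n-1)$ submatrix. I would then exploit the combinatorial structure of $w_{i,j}$: because $w_{i,j}$ for $i > j$ depends only on $j$ (through $x_j$) and on $i$ (through $y_{i,i}$) in a rank-one fashion, and similarly for $i < j$, the matrix $L^{(\m(A))}$ has a very special triangular-plus-low-rank form when the elements of $A$ are listed in increasing order. Concretely, using elementary column operations — subtracting consecutive columns, exploiting that the ``$k < i$'' part of row $i$ has the common factor $y_{i,i}$ independent of the column — one should be able to reduce the determinant to a bidiagonal or triangular form whose diagonal entries are exactly the claimed factors $y_{i,i}\sum_{k \leq i} x_k + x_i \sum_{k > i} y_{i,k}$ for $i \in A \setminus \{\m(A), \M(A)\}$, together with the boundary contributions $x_{\m(A)}$ (from absorbing the deleted root row/column, i.e.\ the $x_{\m(A)}$ appearing in $w_{j,\m(A)} = x_{\m(A)} y_{j,j}$) and $y_{\M(A),\M(A)}$ (from the last vertex, whose only admissible weight contributions are of the form $x_k y_{\M(A),\M(A)}$).

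The main obstacle I anticipate is the bookkeeping in the column-reduction: one has to track how the ``correction'' term $-x_i y_{i,i}$ in each diagonal entry cancels against off-diagonal contributions, and how the deletion of the root row/column correctly produces the prefactor $x_{\m(A)}$ rather than, say, $x_{\m(A)} y_{a_2,a_2}$ or some other spurious factor. An alternative route that sidesteps the determinant entirely — and which may in fact be cleaner — is a direct recursive argument on the largest vertex $\M(A)$: in any $T \in \cD(A)$ the vertex $\M(A)$ is a leaf attached to some $k < \M(A)$ via an edge of weight $x_k y_{\M(A),\M(A)}$, and deleting it yields a tree in $\cD(A \setminus \{\M(A)\})$; summing over the choice of $k$ and matching against the telescoping that turns $y_{\M(A'),\M(A')}\sum_{k \leq \M(A')} x_k + x_{\M(A')}\sum_{k > \M(A')} y_{\M(A'),k}$ (where $\M(A') = \M(A \setminus \{\M(A)\})$) into $y_{\M(A),\M(A)}\sum_{k \in A} x_k$ recovers the product form by induction on $|A|$. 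I would carry out whichever of these two — Matrix-Tree evaluation or the leaf-removal recursion — turns out to require less index juggling; the recursion is likely the safer choice, with the base case $|A| = 2$ giving the single tree of weight $x_{\m(A)} y_{\M(A),\M(A)}$, matching the empty product.
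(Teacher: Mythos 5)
Your first route is precisely the paper's proof: apply the directed matrix--tree theorem to the weighted Laplacian you describe and evaluate the principal minor $\det(K_{1,1})$ obtained by deleting the row and column of $\m(A)$. For the record, the reduction you leave as a hope does go through in two steps. Since every row of the full matrix $K$ sums to zero, adding all columns of $K_{1,1}$ to the last column replaces that column by the vector with entries $x_{\m(A)}y_{i,i}$. Then, because the entire sub-diagonal part of row $i$ is $y_{i,i}$ times the fixed vector $(-x_j)_{j<i}$ (your ``rank-one'' observation), subtracting $\tfrac{y_{i-1,i-1}}{y_{i,i}}$ times row $i$ from row $i-1$ clears both the sub-diagonal entries and the new last column (except for its bottom entry $x_{\m(A)}y_{\M(A),\M(A)}$), while adding $x_{i-1}y_{i-1,i-1}$ to the $(i-1)$st diagonal entry --- this is exactly where your correction term $-x_iy_{i,i}$ is absorbed, turning $y_{i,i}\sum_{j<i}x_j$ into $y_{i,i}\sum_{j\le i}x_j$. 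Expanding along the last column and reading off the remaining triangular determinant gives the stated product, with no spurious factors. So the bookkeeping you were worried about is short, and you should simply carry it out.

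The alternative you say is ``likely the safer choice'' is the one with a genuine gap: in a Cayley tree the largest vertex $\M(A)$ need not be a leaf. The set $\cD(A)$ consists of \emph{all} labelled trees on $A$ (rooted at $\m(A)$ only for orientation), so $\M(A)$ can have arbitrarily many children, and deleting it does not produce a tree on $A\setminus\{\M(A)\}$. It is true that $y_{\M(A),\M(A)}$ appears exactly once in every $w(T)$ (via the unique edge from $\M(A)$ to its father), which explains why the right-hand side is linear in that variable, but that observation alone does not set up your leaf-removal recursion, and the induction as you describe it fails at the first tree in which $\M(A)$ is internal. If you want a recursion rather than a determinant, you would need a different decomposition (e.g.\ splitting off the root subtree containing a fixed vertex, as the paper does later for other purposes); as written, stick with the matrix--tree computation.
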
 
\begin{proof}
	This follows from a straightforward application of the matrix tree theorem and is essentially the same as the method used in part of the proof of Proposition~1 in Strehl\cite{Strehl}.
	Without loss of generality we may assume that $A = \{1, 2, \cdots, n\}$.
			\[ \sum_{T \in \cD(\{1, 2, \cdots, n\})} w(T) = \det(K_{1,1}), \]
	where
		\[ k_{i,j} = 	\begin{cases}
							-x_i y_{i,j}	&	\mbox{ if } 1 \leq i < j \leq n, \\
							-x_j y_{i,i}	&	\mbox{ if } 1 \leq j < i \leq n, \\
							y_{i,i} \sum_{m=1}^{i-1} x_m + x_i \sum_{m = i+1}^n y_{i,m}	&	\mbox{ if } i = j,
						\end{cases} \]
	and $K_{1,1}$ is the matrix $K$ with the first row and column removed. Adding each of the columns to the last column and then subtracting $\frac{y_{i-1,i-1}}{y_{i,i}}$ times row $i$ from row $i-1$ for
	each $i$ gives the matrix $L$ with
		\[ \ell_{i,n} = 0 \mbox{ for } 1 \leq i < n-1, \qquad \ell_{i,j} = 0 \mbox{ for } 1 \leq j < i \leq n-1, \]
		\[ \ell_{i,i} = y_{i+1,i+1} \sum_{j=1}^{i+1} x_j + x_{i+1} \sum_{j=i+2}^n y_{i+1,j} \mbox{ for } 1 \leq i < n-1, \]
	and $\ell_{n-1,n-1} = y_{n,n}x_1.$
	Since $\det(K_{1,1}) = \det(L)$ is the product of the main diagonal of $L$, the result follows.
\end{proof}

\begin{proof}[Proof of Theorem~\ref{Thm.2}.]
	First note that by expanding it is easily seen that
		\[ \sum_{T \in \fT_A} \prod_{v = 2}^n x_{\cf_T(v)} \left( \sum_{u \in \hook_T(v)} y_{v,u} \right) = \sum_{(T,\phi)} \prod_{v=2}^n x_{\cf_T(v)} y_{v,\phi(v)} \]
	where the sum is over all pairs $(T, \phi)$ where $T$ is in $\fT_A$ and $\phi$ is a map from the set of increasing vertices in $T$ to $A$ with $\phi(v) \in \hook_T(v)$
	for all increasing vertices $v$. However, this is equal to the sum
		\[ \sum_{i \geq 0} \sum_{(T,\phi) \in \cL_{0,i}(A)} \om(T,\phi). \]
	Applying Theorem~\ref{Bijection} then gives, letting $\cD(A)$ be the set of Cayley trees with vertex labels given by $A$ as in Proposition~\ref{MatrixTreeProp},
		\[ \sum_{j \geq 0} \sum_{(T,\phi) \in \cL_{j,0}(A)} \om(T,\phi) = \sum_{T \in \cD(A)} \prod_{\substack{increasing \\ v \in V(T)}} x_{\cf_T(v)}y_{v,v} \prod_{\substack{decreasing \\ v \in V(T)}} x_v y_{v,\cf_T(v)}. \]
	The result then follows by applying Proposition~\ref{MatrixTreeProp}
\end{proof}

\section{An Indirect Combinatorial Proof}\label{Section.Indirect}

We will now give an indirect combinatorial proof of Theorem~\ref{Thm.2} which relies on Proposition~\ref{MatrixTreeProp}. Given a set $A$ of positive integers, let $t_A = 1$ if $|A| = 1$ and for $|A| > 1$,
	\[ t_A = x_{\m(A)} y_{\M(A),\M(A)} \prod_{i \in A \backslash \{\M(A),\m(A)\}} \left( y_{i,i} \sum_{\substack{j \in A \\ j \leq i}} x_j + x_i \sum_{\substack{j \in A \\ j > i}} y_{i,j} \right). \]
From Proposition~\ref{MatrixTreeProp} above we immediately get the following two results.
\begin{lem}\label{singleEdge}
	Let $\cE(A)$ be the subset of trees in $\cD(A)$ which have a unique edge incident with $\m(A)$. Then if
		\[ r_A = \sum_{T \in \cE(A)} w(T), \]
	with $w(T)$ as defined in Proposition~\ref{MatrixTreeProp}, then
		\[ r_A = x_{\m(A)} y_{\M(A),\M(A)}\prod_{i \in A \backslash \{\M(A),\m(A)\}} \left( y_{i,i} \sum_{\substack{j \in A \backslash \m(A) \\ j \leq i}} x_j + x_i \sum_{\substack{j \in A \backslash \m(A) \\ j > i}} y_{i,j} \right). \]
\end{lem}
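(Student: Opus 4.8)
The plan is to obtain $r_A$ from the product formula for $\sum_{T\in\cD(A)}w(T)$ in Proposition~\ref{MatrixTreeProp} by isolating those Cayley trees in which the root $\m(A)$ has degree one. The key observation is structural: a tree $T\in\cD(A)$ in which $\m(A)$ has a unique incident edge $(m,j)$ (with $m=\m(A)<j$, so this is an increasing edge carrying weight $x_m y_{m,j}$) is the same thing as a choice of neighbour $j\in A\backslash\m(A)$ together with a Cayley tree on $A\backslash\m(A)$ rooted at $j$, reattached below $m$. So $r_A$ should factor as $x_{\m(A)}$ times a sum over $j$ of $y_{m,j}$ times the generating polynomial for rooted Cayley trees on $A\backslash\m(A)$ with root $j$, where the edge weights $w_{i,i'}$ are the \emph{same} as in Proposition~\ref{MatrixTreeProp} (the weight of an edge $(i,i')$ with $i<i'$ is $x_i y_{i,i'}$ and with $i>i'$ is $x_{i'}y_{i,i}$, independent of which vertex is the root).

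First I would set up the determinant exactly as in the proof of Proposition~\ref{MatrixTreeProp}: without loss of generality $A=\{1,\dots,n\}$, and $\sum_{T\in\cD(A)}w(T)=\det(K_{1,1})$ with the same matrix $K$. The extra constraint ``degree of vertex $1$ equals one'' can be read off the matrix-tree expansion: in the sum over trees, the contribution of trees where vertex $1$ has degree exactly one is obtained by keeping, from the diagonal entry $k_{1,1}=y_{1,1}\sum_{m<1}x_m+x_1\sum_{m\ge 2}y_{1,m}=x_1\sum_{m=2}^n y_{1,m}$, only the linear dependence on a single off-diagonal edge at vertex $1$. Equivalently — and this is the cleanest route — $r_A=\sum_{j=2}^n x_1 y_{1,j}\cdot\big(\text{generating polynomial for Cayley trees on }\{2,\dots,n\}\text{ rooted at }j\big)$. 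But the generating polynomial for Cayley trees on a set rooted at \emph{any} fixed vertex is root-independent (the weight $w(T)$ depends only on the undirected edge set, since $w_{i,i'}$ is symmetric under swapping root side), so each such polynomial equals $\sum_{T\in\cD(\{2,\dots,n\})}w(T)$, which Proposition~\ref{MatrixTreeProp} evaluates to $x_{2}y_{n,n}\prod_{i=3}^{n-1}(\,y_{i,i}\sum_{2\le j\le i}x_j+x_i\sum_{j>i}y_{i,j}\,)$.

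Assembling, $r_A = x_1\big(\sum_{j=2}^n y_{1,j}\big)\cdot x_2 y_{n,n}\prod_{i=3}^{n-1}\big(y_{i,i}\sum_{2\le j\le i}x_j+x_i\sum_{j>i}y_{i,j}\big)$, and I would then check this is exactly the claimed formula: the factor $x_1=x_{\m(A)}$ and $y_{n,n}=y_{\M(A),\M(A)}$ match; the index $i=2=\m(A\backslash\m(A))$ contributes $\big(y_{2,2}\cdot x_2 + x_2\sum_{j>2}y_{2,j}\big)$ from the displayed formula restricted to sums over $j\in A\backslash\m(A)$, which is $x_2\big(y_{2,2}+\sum_{j>2}y_{2,j}\big)$; and for $3\le i\le n-1$ the two formulas agree term by term once the inner sums are restricted to omit $\m(A)=1$ (which only affects the $\sum_{j\le i}x_j$ part, dropping the $x_1$ summand). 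The one point needing care — and I expect it to be the only real obstacle — is bookkeeping at the boundary index $i=2$: in the target formula the product ranges over $i\in A\backslash\{\M(A),\m(A)\}=\{2,\dots,n-1\}$ and the $i=2$ factor must be shown to equal $x_1\sum_{j=2}^n y_{1,j}$ times nothing, i.e. the ``$x_1\sum y_{1,j}$'' must be absorbed correctly. Working out which factor of the product corresponds to which vertex, and verifying the $x_{\m(A)}y_{\M(A),\M(A)}$ prefactors line up after deleting $\m(A)$, is routine but must be done explicitly; alternatively one can give a direct column-operation argument on $K_{1,1}$ paralleling Proposition~\ref{MatrixTreeProp} after first replacing $k_{1,1}$ by the ``degree-one'' linear part, which avoids the root-independence remark entirely.
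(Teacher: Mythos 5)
There is a genuine error at the heart of your decomposition: you have misread the edge-weight convention of Proposition~\ref{MatrixTreeProp}. The trees in $\cD(A)$ have their edges directed \emph{toward} the root, so the unique edge at $\m(A)$ is $(c,\m(A))$ with $c>\m(A)$, which falls in the case $i>j$ and carries weight $x_{\m(A)}y_{c,c}$ --- not $x_{\m(A)}y_{\m(A),c}$ as you assert. For the same reason your claim that $w(T)$ is independent of the choice of root is false: an edge $\{u,v\}$ with $u<v$ contributes $x_u y_{u,v}$ when $v$ is the father but $x_u y_{v,v}$ when $u$ is the father. The formula you assemble, $x_1\bigl(\sum_{j\ge 2}y_{1,j}\bigr)\,t_{\{2,\dots,n\}}$, is therefore not equal to the lemma's right-hand side: already for $A=\{1,2,3\}$ direct enumeration of the two admissible trees gives $r_A=x_1x_2y_{3,3}(y_{2,2}+y_{2,3})$, matching the lemma, whereas your expression is $x_1x_2y_{3,3}(y_{1,2}+y_{1,3})$. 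The ``boundary bookkeeping at $i=2$'' that you flag as the only delicate point is precisely where the argument breaks, and it cannot be repaired by reindexing; the correct factorization is $r_A=x_{\m(A)}\bigl(\sum_{j\in B}y_{\m(B),j}\bigr)t_B$ with $B=A\backslash\m(A)$ (compare the proof of Proposition~\ref{LittleRecursion}), which requires summing $x_{\m(A)}y_{c,c}$ against the root-$c$ generating polynomials, not pulling out a root-independent factor.

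Your secondary suggestion via the determinant is also off as stated: $k_{1,1}$ does not appear in $\det(K_{1,1})$ at all (the first row and column are deleted); the degree of vertex $1$ is instead recorded by the $x_1y_{i,i}$ terms inside the surviving diagonal entries. This points to the paper's actual proof, which is a one-line observation: $x_{\m(A)}$ occurs in $w(T)$ only through edges whose smaller endpoint is $\m(A)$, so the exponent of $x_{\m(A)}$ in $w(T)$ equals $\deg_T(\m(A))$, and hence $r_A=x_{\m(A)}\left.\frac{d}{dx_{\m(A)}}t_A\right|_{x_{\m(A)}=0}$. Applying this operator to the closed product formula for $t_A$ (each factor loses its $x_{\m(A)}$ summand, and the prefactor supplies the single surviving power) yields the stated identity immediately.
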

\begin{proof}
	This follows from the observation that
		\[ r_A = x_{\m(A)} \left. \frac{d}{d x_{\m(A)}} t_A \right|_{x_{\m(A)} = 0}. \]
\end{proof}
\begin{pro}\label{LittleRecursion}
	With the polynomials $t_A$ and $r_A$ as defined above with $|A| > 1$ and for any $a \in A \backslash \{ \m(A) \}$,
		\[ t_A = \sum_{\substack{B \sqcup C = A \\ \m(A) \in C \\ a \in B}} x_{\m(A)} \left( \sum_{j \in B} y_{\m(B), j} \right) t_B t_C. \]
\end{pro}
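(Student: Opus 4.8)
The plan is to read the recursion off the Cayley-tree description of $t_A$ in Proposition~\ref{MatrixTreeProp}, by decomposing a tree according to how the prescribed vertex $a$ hangs below the root. By Proposition~\ref{MatrixTreeProp} we have $t_A=\sum_{T\in\cD(A)}w(T)$, where $\cD(A)$ is the set of trees on $A$ rooted at $\m(A)$ with edges oriented toward the root. Fix $T\in\cD(A)$; let $c$ be the neighbour of $\m(A)$ on the path from $\m(A)$ to $a$, let $B$ be the vertex set of the component of $T-\m(A)$ that contains $c$ (and hence $a$), and set $C=A\setminus B$, so $a\in B$, $\m(A)\in C$ and $\m(C)=\m(A)$. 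First I would check that $T\mapsto(B,C,T_B,T_C)$ is a bijection onto the set of data where $B\sqcup C=A$ with $a\in B$ and $\m(A)\in C$, $T_C\in\cD(C)$, and $T_B$ is a rooted tree on $B$ (with edges oriented toward its root); the inverse glues $T_B$ to $T_C$ by an edge joining $\m(A)$ to the root $c$ of $T_B$. Since $\m(A)$ is the global minimum, that edge is oriented toward $\m(A)$ and contributes $w_{c,\m(A)}=x_{\m(A)}y_{c,c}$, whence $w(T)=x_{\m(A)}\,y_{c,c}\,w(T_B)\,w(T_C)$; summing, and using Proposition~\ref{MatrixTreeProp} for $C$,
\[ t_A=\sum_{\substack{B\sqcup C=A\\ a\in B,\ \m(A)\in C}} t_C\sum_{c\in B} x_{\m(A)}\,y_{c,c}\sum_{T_B}w(T_B), \]
the innermost sum being over trees on $B$ rooted at $c$.

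The second step is to evaluate the inner double sum over $c\in B$ and $T_B$. Adjoining $\m(A)$ as a leaf attached to $c$ gives a bijection between pairs $(c,T_B)$ and trees on $B\cup\{\m(A)\}$ rooted at $\m(A)=\m(B\cup\{\m(A)\})$ in which $\m(A)$ is incident with a single edge, under which $w(T_B)$ acquires the factor $w_{c,\m(A)}=x_{\m(A)}y_{c,c}$; so the inner double sum equals $r_{B\cup\{\m(A)\}}$ in the notation of Lemma~\ref{singleEdge}. Applying Lemma~\ref{singleEdge} to $B\cup\{\m(A)\}$, whose minimum is $\m(A)$ and whose maximum is $\M(B)$, the product there runs over $i\in B\setminus\{\M(B)\}$, and (for $|B|>1$) the factor indexed by $i=\m(B)$ collapses,
\[ y_{\m(B),\m(B)}\!\!\sum_{\substack{j\in B\\ j\le\m(B)}}\!\! x_j\ +\ x_{\m(B)}\!\!\sum_{\substack{j\in B\\ j>\m(B)}}\!\! y_{\m(B),j}\ =\ x_{\m(B)}\sum_{j\in B}y_{\m(B),j}, \]
while the remaining factors rebuild $t_B$; the case $|B|=1$ is immediate. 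This yields $r_{B\cup\{\m(A)\}}=x_{\m(A)}\bigl(\sum_{j\in B}y_{\m(B),j}\bigr)t_B$, and substituting it into the previous display gives exactly the asserted identity (which in particular is seen to be independent of $a$).

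Most of the work is bookkeeping, but the point that needs care is the interaction between edge orientations and the two distinguished vertices $\m(\cdot)$, $\M(\cdot)$. The weights $w_{i,j}$ are not symmetric, so re-rooting a tree changes its weight; it therefore matters that $T_B$ together with the glueing edge is recorded as a tree of $\cE(B\cup\{\m(A)\})$ rooted at the genuine minimum $\m(A)$, not at $c$. Making that inner sum collapse to a ``single edge at the root'' generating series is precisely the purpose served by the polynomials $r_A$ of Lemma~\ref{singleEdge}, so no ingredient beyond Proposition~\ref{MatrixTreeProp} and Lemma~\ref{singleEdge} is required.
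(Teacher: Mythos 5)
Your proof is correct and follows essentially the same route as the paper: decompose a Cayley tree at the unique child of $\m(A)$ whose subtree contains $a$, identify that subtree together with its root edge as an element of $\cE(B\cup\{\m(A)\})$ so the inner sum is $r_{B\cup\{\m(A)\}}$, and then compute $r_{B\cup\{\m(A)\}}=x_{\m(A)}\bigl(\sum_{j\in B}y_{\m(B),j}\bigr)t_B$ from Lemma~\ref{singleEdge}. You merely supply more detail than the paper does for that last evaluation (the collapse of the $i=\m(B)$ factor), which the paper asserts without computation.
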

\begin{proof}
	By Proposition~\ref{MatrixTreeProp} we know that $t_A = \sum_{T \in \cD(A)} w(T)$. For any $T \in \cD(A)$ there is a unique child $v$ of $\m(A)$ for which the subtree
	rooted at $v$ contains $a$. Letting $B$ be the set of labels in the subtree it follows from Lemma~\ref{singleEdge} that $w(T) = w(T_1)w(T_2)$ where $T_1 \in \cE(B \cup \m(A))$
	and $T_2 \in \cD(A \backslash B)$. Thus,
		\[ t_A = \sum_{\substack{B \sqcup C \\ \m(A) \in C \\ a \in B}} r_{B \cup \m(A)} t_C. \]
	We also see that
		\[ r_{B \cup \m(A)} = x_{\m(A)} \left( \sum_{j \in B} y_{\m(B),j} \right) t_B, \]
	from which the result follows.
\end{proof}

Given a finite set $A$ of positive integers let
	\[ \Ga_A = x_{\m(A)} \left. \frac{d}{d x_{\m(A)}} \Ta_A \right|_{x_{\m(A)} = 0} = \sum_{T \in \fR_A} \widetilde{w}(T) \]
where $\fR_A$ is the subset of $\fT_A$ in which there is a single edge incident with $\m(A)$.
\begin{pro}\label{BigRecursion}
	With $|A| > 1$ and for any $a \in A \backslash \{\m(A)\}$,
		\[ \Ta_A = \sum_{\substack{B \sqcup C = A \\ \m(A) \in C \\ a \in B}} x_{\m(A)} \left( \sum_{j \in B} y_{\m(B), j} \right) \Ta_B \Ta_C. \]
\end{pro}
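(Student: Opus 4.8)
The plan is to prove the recursion directly, by decomposing each increasing tree in $\fT_A$ at the root, in exact analogy with the proof of Proposition~\ref{LittleRecursion} for Cayley trees. Since $\Ta_A = \sum_{T \in \fT_A} \widetilde w(T)$, it suffices to produce, for the fixed $a \in A \setminus \{\m(A)\}$, a bijection between $\fT_A$ and the disjoint union of the sets $\fT_B \times \fT_C$ over all $B \sqcup C = A$ with $\m(A) \in C$ and $a \in B$, under which $\widetilde w(T)$ factors as $x_{\m(A)}\bigl(\sum_{j \in B} y_{\m(B),j}\bigr)\,\widetilde w(T|_B)\,\widetilde w(T|_C)$, where $T|_B$, $T|_C$ denote the induced subtrees.

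First I would record the elementary fact that in an increasing tree the labels strictly increase along every path from the root: applying $\cf_T(i) < i$ (for every non-root $i$) to consecutive vertices of a root path gives this, and hence the root of any subtree of $T$ carries its smallest label. Consequently, if $v$ is a child of $\m(A)$ and $B = \hook_T(v)$, the subtree of $T$ induced on $B$ is an increasing tree rooted at $v = \m(B)$, and deleting this subtree leaves an increasing tree on $C = A \setminus B$ rooted at $\m(A) = \m(C)$.

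Given $T \in \fT_A$, take $v$ to be the unique child of $\m(A)$ on the path from $\m(A)$ to $a$ and set $B = \hook_T(v)$, $C = A \setminus B$; then $a \in B$, $\m(A) \in C$, and $T|_B \in \fT_B$, $T|_C \in \fT_C$ by the previous step. The weight factorization is then a direct check: the factor of $\widetilde w(T)$ indexed by $i = v$ is $x_{\cf_T(v)}\sum_{j\in\hook_T(v)}y_{v,j} = x_{\m(A)}\sum_{j\in B}y_{\m(B),j}$; for $i \in B \setminus \{v\}$ the whole subtree of $i$ lies in $B$, so $\hook_T(i) = \hook_{T|_B}(i)$ and $\cf_T(i) = \cf_{T|_B}(i)$, and these factors multiply to $\widetilde w(T|_B)$; and for $i \in C \setminus \{\m(A)\}$, since $i$ is not an ancestor of $v$, one gets $\hook_T(i) \subseteq C$ with $\hook_T(i) = \hook_{T|_C}(i)$ and $\cf_T(i) = \cf_{T|_C}(i)$, so these factors multiply to $\widetilde w(T|_C)$; and $A \setminus \{\m(A)\}$ is partitioned into $\{v\}$, $B \setminus \{v\}$, and $C \setminus \{\m(A)\}$, so the three pieces exhaust $\widetilde w(T)$.

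The inverse map attaches $T_B \in \fT_B$ to $T_C \in \fT_C$ by the edge joining $\m(A)$ to $\m(B)$, with $\m(B)$ a child of $\m(A)$; this tree is increasing because $\m(A) < \m(B)$, it lies in $\fT_A$, and from it one recovers $v = \m(B)$ as the child of the root toward $a$ and $B$ as its hook, so the two constructions are mutually inverse. Summing the weight identity over $\fT_A$ yields the stated recursion. I expect the only delicate points to be the two hook-restriction identities $\hook_T(i) = \hook_{T|_B}(i)$ (for $i$ a descendant of $v$) and $\hook_T(i) = \hook_{T|_C}(i)$ (for $i \in C$ with $i \neq \m(A)$, using that $i$ and $v$ are incomparable in the ancestor order), both of which come down to the observation that the only ancestor of $v$ in $T$ is $\m(A)$; everything else is bookkeeping mirroring Proposition~\ref{LittleRecursion}.
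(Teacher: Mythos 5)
Your proof is correct and follows essentially the same route as the paper: both decompose $T$ at the unique child $v$ of $\m(A)$ whose hook contains $a$, identify $v=\m(B)$ using the increasing property, and factor the weight into the root-edge contribution $x_{\m(A)}\sum_{j\in B}y_{\m(B),j}$ times the weights of the two induced increasing subtrees. The paper merely packages the root-edge factor via the series $\Ga_{B\cup\m(A)}$ for trees with a single edge at the root, while you carry out the same weight factorization explicitly.
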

\begin{proof}
	As in the proof of Proposition~\ref{LittleRecursion} by considering the subtree of $\m(A)$ which contains $a$ we see that
		\[ \Ta_A = \sum_{\substack{B \sqcup C = A \\ \m(A) \in C \\ a \in B}} \Ga_{B \cup \m(A)} \Ta_C. \]
	Since for any tree $T \in \fR(B \cup \m(A))$ we have $\hook_T(\m(B)) = B$ this gives
		\[ \Ga_{B \cup \m(A)} = x_{\m(A)} \left( \sum_{j \in B} y_{\m(B),j} \right) \Ta_B \]
	from which the result follows.
\end{proof}

\begin{proof}[Proof of Theorem~\ref{Thm.2}.]
That
	\[ \Ta_A = x_{\m(A)} y_{\M(A),\M(A)} \prod_{i \in A \backslash \{\M(A),\m(A)\}} \left( y_{i,i}\sum_{\substack{j \in A \\ j \leq i}} x_j + x_i \sum_{\substack{j \in A \\ j > i}} y_{i,j} \right), \]
for $|A| > 1$ follows by induction after comparing Proposition~\ref{LittleRecursion} and Proposition~\ref{BigRecursion} and checking the base case
	\[ \Ta_A = 1 = t_A, \]
when $|A| = 1$.
\end{proof}

\begin{rem}
	The indirect combinatorial proof above uses the canonical decomposition of an unordered increasing tree by removing the edge with vertex labels $1$ and $2$. The same result can be obtained by
	using the decomposition in which the vertex labelled $1$ is removed. In either case the proof is essentially the same, the generating series for hook-weighted increasing trees and weighted
	labelled trees are shown to satisfy the same recursion.
\end{rem}

\section{An Algebraic Proof}\label{Section.Algebraic}

Lastly we give an algebraic proof of Theorem~\ref{Thm.2} which proceeds by induction on the number of vertices. In fact, we prove a small variation of Theorem~\ref{Thm.2} as it
will make the algebraic manipulations that follow a little easier.
\begin{thm}[Variation on Theorem~\ref{Thm.2}]\label{Thm.3}
Let $\fT_n = \fT_{\{1, 2, \cdots, n\}}$ and let
	\[ \Ta_n = \sum_{T \in \fT_n} \left( \prod_{i = 2}^n x_{\cf_T(i)} \right)\left( \prod_{i = 1}^n \left( \sum_{j \in \hook_T(i)} y_{i,j} \right) \right). \]
Then for $n \geq 1$,
	\[ \Ta_n = y_{n,n} \prod_{i=1}^{n-1} \left( y_{i,i} \sum_{j=1}^i x_j + x_i \sum_{j=i+1}^n y_{i,j} \right). \]
\end{thm}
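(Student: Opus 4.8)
The plan is to prove Theorem~\ref{Thm.3} by induction on $n$, building increasing trees on $\{1,\dots,n\}$ from increasing trees on $\{1,\dots,n-1\}$ by attaching the new vertex $n$ as a leaf. The base case $n=1$ is immediate: the only tree is a single vertex, $\Ta_1 = y_{1,1}$, matching the right-hand side. For the inductive step, observe that every $T \in \fT_n$ is obtained uniquely from some $T' \in \fT_{n-1}$ by choosing a father $\cf_T(n) = k \in \{1,\dots,n-1\}$ for the new leaf $n$. The key is to understand how the weight changes. Adding the leaf $n$ with father $k$ contributes a factor $x_k$ from the product $\prod_{i=2}^n x_{\cf_T(i)}$, and contributes a factor $\bigl(\sum_{j \in \hook_T(n)} y_{n,j}\bigr) = y_{n,n}$ since $\hook_T(n) = \{n\}$. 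More delicately, for each ancestor $i$ of $k$ in $T$ (including $k$ itself, and always including $1$), the hook $\hook_T(i)$ gains the element $n$, so the factor $\sum_{j \in \hook_{T'}(i)} y_{i,j}$ becomes $\sum_{j \in \hook_{T'}(i)} y_{i,j} + y_{i,n}$; for non-ancestors $i$ the factor is unchanged.

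First I would set up notation for this: write $S_i(T') = \sum_{j \in \hook_{T'}(i)} y_{i,j}$ for $i \in \{1,\dots,n-1\}$, so that $\Ta_{n-1} = \sum_{T'} \bigl(\prod_{i=2}^{n-1} x_{\cf_{T'}(i)}\bigr)\prod_{i=1}^{n-1} S_i(T')$. Then
\[
\Ta_n = y_{n,n} \sum_{k=1}^{n-1} x_k \sum_{T' \in \fT_{n-1}} \left( \prod_{i=2}^{n-1} x_{\cf_{T'}(i)} \right) \prod_{i \in P(k,T')} \bigl( S_i(T') + y_{i,n} \bigr) \prod_{i \notin P(k,T')} S_i(T'),
\]
where $P(k,T')$ is the set of ancestors of $k$ in $T'$ together with $k$. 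The main obstacle is evaluating this sum: the factor $\prod_{i \in P(k,T')}(S_i + y_{i,n})$ does not factor over the tree in an obvious way, and the set $P(k,T')$ depends intricately on $T'$. My approach to this would be to expand the product $\prod_{i \in P(k,T')}(S_i(T') + y_{i,n})$ and reorganize: summing over $k$ and over which ancestors "pick" the $y_{i,n}$ term should, after regrouping, telescope into a clean recursion. Concretely, I expect that summing $x_k$ over all $k$ with $i \in P(k,T')$ — i.e., over all $k$ in the subtree rooted at $i$ — produces exactly $\sum_{j \in \hook_{T'}(i)} x_j$, which is the expression appearing in the target formula.

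Alternatively, and perhaps more cleanly, I would avoid the explicit leaf-addition bookkeeping and instead verify that the right-hand side satisfies the same leaf-addition recursion. That is, denote $R_n = y_{n,n}\prod_{i=1}^{n-1}\bigl(y_{i,i}\sum_{j=1}^i x_j + x_i \sum_{j=i+1}^n y_{i,j}\bigr)$ and show directly that $R_n$ can be written as $y_{n,n}\sum_{T' \in \fT_{n-1}}\sum_{k=1}^{n-1} x_k \cdot (\text{updated weight of } T')$ by manipulating the product formula — specifically, by writing each factor $y_{i,i}\sum_{j=1}^i x_j + x_i\sum_{j=i+1}^n y_{i,j}$ in terms of its $n=n-1$ analogue plus the correction $x_i y_{i,n}$, and expanding. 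The hard part in either route is the same combinatorial identity: that the weight increments across all choices of leaf-father $k$ reassemble correctly. Given the structure of the formula I would lean on the observation that $\hook$ sets of ancestors are nested, so the expansion is governed by a chain, making the bookkeeping tractable; I would carry out that expansion carefully as the technical heart of the proof, then conclude by induction.
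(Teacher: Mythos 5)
Your decomposition is the same one the paper uses in Section~4: build each tree of $\fT_n$ by attaching $n$ as a leaf to some $k$ in a tree $T'\in\fT_{n-1}$, and note that the factor $\sum_{j\in\hook(i)}y_{i,j}$ gains the term $y_{i,n}$ precisely for the vertices $i$ on the path from the root to $k$. But the proposal stops exactly where the proof has to start. The recursion you write down,
\[ \Ta_n = y_{n,n}\sum_{k=1}^{n-1}x_k\sum_{T'\in\fT_{n-1}}\Bigl(\prod_{i=2}^{n-1}x_{\cf_{T'}(i)}\Bigr)\prod_{i\in P(k,T')}\bigl(S_i(T')+y_{i,n}\bigr)\prod_{i\notin P(k,T')}S_i(T'), \]
is not a recursion in $\Ta_{n-1}$: its right side is a sum over trees of a tree-dependent modification of the weight, so the induction hypothesis (the closed product form of $\Ta_{n-1}$) cannot be applied to it, and your alternative route of checking that the product formula ``satisfies the same recursion'' founders on the same point, since there is no closed recursion yet to check against. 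Moreover, the one concrete claim you make about how the expansion should close is off target: summing $x_k$ over the $k$ with $i\in P(k,T')$ does give $\sum_{j\in\hook_{T'}(i)}x_j$, but for an increasing tree this is a sum over $j\ge i$, whereas the corresponding factor in the claimed product is $y_{i,i}\sum_{j\le i}x_j+x_i\sum_{j>i}y_{i,j}$, whose $x$-sum runs over $j\le i$. These do not match, so the expansion does not telescope in the way you predict.

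The missing idea is the substitution operator the paper introduces. Since the variable $y_{k,i}$ occurs in $\widetilde{w}(T')$ exactly once, namely in the factor $\sum_{j\in\hook_{T'}(k)}y_{k,j}$ and only when $i\in\hook_{T'}(k)$ (equivalently, when $k$ is an ancestor of $i$ or $k=i$), the tree-dependent update ``add $y_{k,n}$ to the hook sum of every such $k$'' is implemented uniformly by the variable substitution $\psi^i_n\colon y_{k,i}\mapsto y_{k,i}+y_{k,n}$ for all $k\le i$, applied to the polynomial $\Ta_{n-1}$ as a whole. This converts your display into the genuine recursion $\Ta_n=y_{n,n}\sum_{i=1}^{n-1}x_i\,\psi^i_n\Ta_{n-1}$, to which the inductive closed form can be fed; the remaining algebra uses $\al_k(n)=\al_k(n-1)+x_ky_{k,n}$ and a telescoping identity for $\prod_k\al_k(n)$. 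Without this device, or an equivalent way of turning the path-dependent hook update into an operation on the generating polynomial itself, your induction does not close; as it stands the proposal is a correct setup with the essential step missing.
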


Note that Theorem~\ref{Thm.2} follows very easily from Theorem~\ref{Thm.3}.
\begin{proof}[Proof of Theorem~\ref{Thm.2}.]

Without loss of generality we may assume that the set $A$ in Theorem~\ref{Thm.2} is $A = \{1, 2, \cdots, n\}$. In this case,
	\[ \Ta_n = \left( \sum_{i = 1}^n y_{1,i} \right) \Ta_A, \]
and so the result follows.
\end{proof}

\begin{proof}[Proof of Theorem~\ref{Thm.3}.]

First, notice that
	\[ \Ta_1 = y_{1,1}, \qquad \mbox{ and } \qquad \Ta_2 = x_1 (y_{1,1} + y_{1,2}) y_{2,2} \]
agree with the combinatorial definition. Now, suppose that $\Ta_n$ is as above and let, for $1 \leq i < j \leq n$, $\psi^i_j$ be the evaluation map which takes
$y_{k,i}$ to $y_{k,i} + y_{k,j}$ for all $1 \leq k \leq i$. Then since every increasing tree on $n+1$ vertices is created by adding the vertex labelled $n+1$ to
some other vertex, we see that
	\[ \Ta_{n+1} = y_{n+1,n+1} \sum_{i=1}^n x_i \psi^i_{n+1} \Ta_n. \]
Let $\al_i(n) = y_{i,i} \sum_{j=1}^i x_j + x_i(\sum_{j=i+1}^n y_{i,j})$ so that $\Ta_n = y_{n,n} \prod_{i=1}^{n-1} \al_i(n)$.
For $1 \leq i \leq n-1$ we have
\begin{align*}
	\psi^i_{n+1} \Ta_n	&= y_{n,n} \prod_{k=1}^{n-1} \left( \psi^i_{n+1} y_{k,k} \sum_{j=1}^k x_j + x_k \sum_{j=k+1}^n \psi^i_{n+1} y_{k,j} \right) \\
						&= y_{n,n} \left( \prod_{k=1}^i \al_k(n+1) \prod_{k=i+1}^{n-1} \al_k(n) + \sum_{j=1}^{i-1} x_j y_{i,n+1} \prod_{k=1}^{i-1} \al_k(n+1) \prod_{k=i+1}^{n-1} \al_k(n) \right).
\end{align*}
If we let
	\[ \be_i(n) = \prod_{k=1}^{i-1} \al_k(n+1) \prod_{k=i+1}^{n-1} \al_k(n), \]
this shows that for $1 \leq i \leq n-1$,
	\[ \psi^i_{n+1} \Ta_n = y_{n,n} \left( \prod_{k=1}^i \al_k(n+1) \prod_{k=i+1}^{n-1} \al_k(n) + \sum_{j=1}^{i-1} x_j y_{i,n+1} \be_i(n) \right). \]
Also,
	\[ \psi^n_{n+1} \Ta_n = (y_{n,n} + y_{n,n+1}) \prod_{k=1}^{n-1} \al_k(n+1). \]
Putting this together gives, after some algebraic manipulation,
\begin{align*}
	\frac{\Ta_{n+1}}{y_{n+1,n+1}}	&= \sum_{i=1}^n \psi^i_{n+1} \Ta_n \\
									&= x_n(y_{n,n} + y_{n,n+1}) \prod_{k=1}^{n-1} \al_k(n+1) \\
									&\qquad + \sum_{i=1}^{n-1} y_{n,n} x_i \left( \prod_{k=1}^i \al_k(n+1) \prod_{k=i+1}^{n-1} \al_k(n) + \sum_{j=i+1}^{n-1} x_j y_{j,n+1} \be_j(n) \right).
\end{align*}
Now, since $\al_k(n+1) = \al_k(n) + x_k y_{k,n+1}$, by expanding from the largest index to the smallest,
\[ \prod_{k=1}^{n-1} \al_k(n+1) = \prod_{k=1}^i \al_k(n+1) \prod_{k=i+1}^{n-1} \al_k(n) + \sum_{j=i+1}^{n-1} x_j y_{j,n+1} \be_j(n). \]
Thus,
\begin{align*}
	\frac{\Ta_{n+1}}{y_{n+1,n+1}}	&= x_n(y_{n,n} + y_{n,n+1}) \prod_{k=1}^{n-1} \al_k(n+1) + \sum_{i=1}^{n-1} y_{n,n} x_i \prod_{k=1}^{n-1} \al_k(n+1) \\
									&= \prod_{k=1}^{n-1} \al_k(n+1) \left( y_{n,n} \sum_{i=1}^n x_i + x_n y_{n,n+1} \right) \\
									&= \prod_{k=1}^n \al_k(n+1).
\end{align*}
\end{proof}

\bibliographystyle{plain}
\bibliography{document}

\end{document}